\documentclass[12pt, reqno]{amsart}
\usepackage{hyperref,xcolor,amssymb}
\usepackage{a4wide}
\usepackage[T1]{fontenc}
\usepackage[utf8]{inputenc}
\usepackage{cite}
\usepackage[numbers,sort&compress]{natbib}
\usepackage{amsmath}
\usepackage{booktabs}
\usepackage{geometry}
\usepackage{bigints}
\geometry{margin=1in}

\newtheorem{thm}{Theorem}[section]

\newtheorem{lemma}[thm]{Lemma}

\theoremstyle{definition}

\newtheorem{remark}[thm]{Remark}
\newtheorem{example}[thm]{Example}
\numberwithin{equation}{section}

%  norm of H

% inner product H

% H^* , H pairing

% Omega    \Om

% \partial Omega      \pOm

%   D(\Omega)   \D

% D'( Omega)        \DP

% D' pairing

% (H^1)^* , H^1    (( pairing ))      \PHH

%  H^{-1} , H_0^1  (( pairing ))   \PHO

 %  H^1(\Omega)     \HO

%  H_0^1( \Omega)       \HOO

% C_c^\infty(omega)

%H_0^1(Omega)  norm

%H_0^1(Omega)   innerproduct

% H^1(\Omega) inner product

% (H^1(\Omega))^*

% ( H^{-1}(\Omega))

\title[New inequalities related to sums of $L^p$ functions]{New inequalities related to sums of $L^p$ functions in connection with Carbery's problems}
\author{A. Aghajani}
\address[A. Aghajani]{School of Mathematics \& computer science, Iran University of Science and Technology, Narmak, Tehran, Iran }
\email{aghajani@iust.ac.ir}

\author{J. Kinnunen}
\address[J. Kinnunen]{Department of Mathematics, Aalto University, P.O. Box 11100, FI-00076 Aalto, Finland}
\email{juha.k.kinnunen@aalto.fi}
\begin{document}

\subjclass{46E30, 26D15, 42B35}

\keywords{Triangle inequality, $L^p$ spaces}

\maketitle

\begin{abstract}
Carbery (2006) proposed novel estimates for the $L^p$ norm of a sum of two nonnegative measurable functions.
Subsequently, Carlen, Frank, Ivanisvili and Lieb (2018) provided stronger bounds, which Ivanisvili and Mooney (2020) further refined to achieve estimates that are, in a certain sense, optimal. 
Continuing this line of research, the present work establishes new upper and lower bounds for the range \(p\in(1,\infty)\).
Carbery also asked under what conditions on a sequence  \((f_j)\) of nonnegative measurable functions the inequality \(\sum \|f_j\|_p^p < \infty\) implies that \(\sum f_j \in L^p\). 
Ivanisvili and Mooney (2020) resolved this question for \(p\in[1,2]\), and the present work proposes an answer for \(p\in[2,\infty)\). 
\end{abstract}

\section{Introduction}

A standard way to estimate the $L^p$ norm of a sum of two functions in $L^p$ on any measure space is to apply Minkowski's inequality 
\[
\|f + g\|_p \leq \|f\|_p + \|g\|_p,
\quad p\in[1,\infty),
\]
or the bound
\begin{equation}\label{1}
\|f + g\|_p^p \leq 2^{p-1}\bigl(\|f\|_p^p + \|g\|_p^p\bigr),
\quad p\in[1,\infty).
\end{equation}
Equality occurs in (\ref{1}) if and only if \( f = g \). 
In what follows we consider functions $f, g\in L^p$, $p\in(1,\infty)$,  that are measurable and nonnegative. 
We  further assume  that,  $\|f\|_p\neq 0$ and $ \|g\|_p\neq 0$, to simplify the presentation and avoid trivial cases.  
Motivated by the inequality
\[
\|f + g\|_2^2 \leq \|f\|_2^2 + 2\|fg\|_1 + \|g\|_2^2,
\]
Carbery \cite{Carbery} proposed several refinements of (\ref{1}) including
\begin{equation}\label{Carbery}
\|f + g\|_p^p 
\leq \left(1 + \frac{\|fg\|_{p/2}}{\|f\|_p \|g\|_p}\right)^{p-1}\bigl(\|f\|_p^p + \|g\|_p^p\bigr),
\quad p\in[2,\infty).
\end{equation}
Equality occurs in (\ref{Carbery})  both if  \( f = g \) and if  $fg=0$, i.e., $f$ and $g$ have disjoint supports. 
By the Cauchy--Schwarz inequality 
\[
\frac{\|fg\|_{p/2}}{\|f\|_p \|g\|_p}\le1,
\quad p\in[2,\infty),
\]  
from which it follows that (\ref{Carbery}) improves on the trivial bound in (\ref{1}).

Carlen, Frank, Ivanisvili and Lieb \cite{Lieb1} proved that
\begin{equation}\label{Lieb}
\|f + g\|_p^p \leq \bigl(1 + \Gamma_p^{2/p})^{p-1}(\|f\|_p^p + \|g\|_p^p\bigr),
\quad  p \in (0, 1] \cup [2, \infty)
\end{equation}
where
\[
\Gamma_p=\Gamma_p(f,g) = \frac{2\|fg\|^{p/2}_{p/2}}{\|f\|_p^p + \|g\|_p^p}.
\]
A reverse inequality holds true if $ p \in (-\infty, 0) \cup (1, 2)$, with the additional assumption that
$f$ and $g$ are positive almost everywhere.
Here 
\[
\|f\|_p=\biggl(\int|f|^p\biggr)^{1/p},
\quad p\ne0.
\]
Not only  (\ref{Lieb}) is stronger than (\ref{Carbery}), it is also optimal, in the sense that $\Gamma_p^{2/p}$ cannot be
replaced by $\Gamma_p^{r}$ for any $r < 2/p$.  Later, Carlen, Frank and Lieb \cite{Lieb-ark}  successfully extended inequality (\ref{Lieb}) for sums of a finite number of functions in $L^p$, see Theorem 1.1 in \cite{Lieb-ark}. 

Ivanisvili and Mooney \cite{Mooney} refined (\ref{Lieb}) by showing that,
for any nonnegative measurable functions $f$ and $g$, we have
\begin{equation}\label{Mooney}
\|f + g\|_p^p \leq \Biggl( \left( \frac{1 + \sqrt{1 - \Gamma_p^2}}{2} \right)^{1/p} + \left( \frac{1 - \sqrt{1 - \Gamma_p^2}}{2} \right)^{1/p} \Biggr)^p
\bigl(\|f\|_p^p + \|g\|_p^p\bigr),
\end{equation}
where \( p \in (0, 1] \cup [2, \infty) \).
The inequality reverses if \( p \in (-\infty, 0) \cup [1, 2] \). Equality holds if 
$(fg)^{p/2} = \alpha(f^p + g^p)$
for some constant \(\alpha \in [0,\frac12] \).
The  function of $\Gamma_p$ on the right-hand side of (\ref{Mooney}) is shown to be optimal in \cite{Mooney} in the sense that for all $\gamma \in [0, 1]$, 
there exist functions $f$ and $g$ for which $\Gamma_p(f, g) = \gamma$, and such that equality holds in (\ref{Mooney}).

The first main result of this paper establishes new upper and lower bounds for $\|f+g\|_p^p$ for all $p\in(1,\infty)$. 
For nonnegative functions $f,g\in L^p$, with $\|f\|_p\ne0$ and $\|g\|_p\ne0$, let
\[ \overline{J}_p (f,g)= 1+\frac{ \Bigl(1+\frac{\| fg^{\frac{1}{p-1}}\|_{p-1}}{\|g^q\|_{p-1}} \Bigr)^p-1-\Bigl(\frac{\| fg^{\frac{1}{p-1}}\|_{p-1}}{\|g^q\|_{p-1}}\Bigr)^p}{1+ \Bigl(\frac{\|f\|_p}{\|g\|_p}\Bigr)^p } \]
and
\[ \underline{J}_p(f,g)= 1+ \frac{  \Big(1+\frac{\|fg^{p-1}\|_1}{\|g\|_p^p} \Big)^p- 1-\Bigl(\frac{\|fg^{p-1}\|_1}{\|g\|_p^p}\Bigr)^p}{1+\Bigl(\frac{\|f\|_p}{\|g\|_p}\Bigr)^p},\]
%We note that
%\[\frac{\| fg^{\frac{1}{p-1}}\|_{p-1}}{\|g^q\|_{p-1}}
%=\biggl(\frac{\|f^{\frac{1}{q}}g^{\frac{1}{p}}\|_p}{\|g\|_p}\biggr)^q
%\quad\text{and}\quad 
%\frac{\|fg^{p-1}\|_1}{\|g\|_p^p}=\biggl(\frac{\|f^{\frac{1}{p}}g^{\frac{1}{q}}\|_p}{\|g\|_p}\biggr)^p,
%\]
where $q=\frac{p}{p-1}$.

\begin{thm}\label{main} Let $ p\in[2,\infty)$. For any nonnegative functions $f,g\in L^p$, with $\|f\|_p\ne0$ and $\|g\|_p\ne0$, we have
\begin{equation}\label{main-ineq}
 \max\{\underline{J}_p (f,g),  \underline{J}_p (g,f)\} 
 \le  \frac{ \|f+g\|_p^p}{\|f\|_p^p +\|g\|_p^p}\le \min\{ \overline{J}_p (f,g), \overline{J}_p (g,f)\}.
\end{equation}
Equalities hold if, for some $\alpha,\beta>0$,  we have $\alpha f=\beta g$ almost everywhere either on the set $\{g\neq 0\}$ or on $\{f\neq 0\}$, or if $f$ and $g$ have disjoint supports up to a set of measure zero.
The reverse inequalities hold for $p\in(1,2]$, with  \( \max \)  and \( \min \) interchanged, assuming   \( f \) and \( g \) are strictly positive almost everywhere in the upper bound.
 In this case, equalities hold if $\alpha f=\beta g $ almost everywhere for some $\alpha,\beta>0$, or if $f$ and $g$ have disjoint supports up to a set of measure zero.
\end{thm}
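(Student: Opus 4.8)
The plan is to reduce the whole two-sided estimate to Jensen's inequality for a single auxiliary probability measure. Set $\phi(t)=(1+t)^p-1-t^p$ and $\psi(t)=(1+t)^p-t^p$ for $t\ge0$, let $\nu$ be the probability measure with density $g^p/\|g\|_p^p$ (supported on $\{g\neq0\}$), and write $u=f/g$ there. Since $(f+g)^p-f^p-g^p$ vanishes on $\{g=0\}$ and equals $g^p\phi(u)$ on $\{g\neq0\}$, one obtains the identity
\[
\frac{\|f+g\|_p^p}{\|f\|_p^p+\|g\|_p^p}=1+\frac{\|g\|_p^p}{\|f\|_p^p+\|g\|_p^p}\int\phi(u)\,d\nu .
\]
A direct computation using $q(p-1)=p$ shows that $\int u\,d\nu=\|fg^{p-1}\|_1/\|g\|_p^p$ and $\bigl(\int u^{p-1}\,d\nu\bigr)^{1/(p-1)}=\|fg^{1/(p-1)}\|_{p-1}/\|g^q\|_{p-1}$, so $\underline{J}_p(f,g)$ and $\overline{J}_p(f,g)$ are exactly the right-hand side above with $\int\phi(u)\,d\nu$ replaced by $\phi\bigl(\int u\,d\nu\bigr)$ and by $\phi\bigl((\int u^{p-1}\,d\nu)^{1/(p-1)}\bigr)$, respectively. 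Thus the theorem for $p\ge2$ is equivalent to
\[
\phi\Bigl(\int u\,d\nu\Bigr)\le\int\phi(u)\,d\nu\le\phi\Bigl(\Bigl(\int u^{p-1}\,d\nu\Bigr)^{1/(p-1)}\Bigr).
\]

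The left inequality is immediate: for $p\ge2$ one has $\phi''(t)=p(p-1)\bigl[(1+t)^{p-2}-t^{p-2}\bigr]\ge0$, so $\phi$ is convex and Jensen gives $\phi(\int u\,d\nu)\le\int\phi(u)\,d\nu$. This yields the lower bound $\underline{J}_p(f,g)$; running the same argument with $f$ and $g$ exchanged (i.e. with the measure $f^p/\|f\|_p^p$ and the ratio $g/f$) gives $\underline{J}_p(g,f)$, hence the maximum on the left of \eqref{main-ineq}.

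For the right inequality I would substitute $v=u^{p-1}$ and set $\chi(v)=\psi\bigl(v^{1/(p-1)}\bigr)=(1+v^{1/(p-1)})^p-v^{p/(p-1)}$, so that $\int\chi(v)\,d\nu=\int\psi(u)\,d\nu$ and $\chi\bigl(\int v\,d\nu\bigr)=\psi\bigl((\int u^{p-1}\,d\nu)^{1/(p-1)}\bigr)$. Because $\phi=\psi-1$, the right inequality is precisely $\int\chi(v)\,d\nu\le\chi(\int v\,d\nu)$, which follows from Jensen once $\chi$ is shown to be \emph{concave} on $[0,\infty)$. This concavity is the main obstacle. Differentiating twice and writing $y=v^{1/(p-1)}$, one finds that $\chi''(v)\le0$ is equivalent to $t^{p-1}\ge(p-1)t-(p-2)$ with $t=y/(1+y)\in[0,1)$; but the right-hand side is exactly the tangent line to the convex function $t\mapsto t^{p-1}$ at $t=1$, so the inequality holds for all such $t$ and $\chi$ is concave. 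Symmetry in $f,g$ then produces $\overline{J}_p(g,f)$ and the minimum on the right of \eqref{main-ineq}.

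Finally, the case $p\in(1,2]$ is handled by the same two reductions with every convexity reversed: now $\phi$ is concave and $t\mapsto t^{p-1}$ is concave, so both Jensen steps flip, which interchanges the roles of $\overline{J}$ and $\underline{J}$ and reverses the inequalities. The hypothesis $f,g>0$ almost everywhere is needed in the upper estimate to ensure $u=f/g$ and the substitution $v=u^{p-1}$ are well defined almost everywhere and that Jensen applies. The equality statements follow from the equality case of Jensen: since $\phi$ (respectively $\chi$) is strictly convex/concave away from the degenerate exponent $p=2$, equality forces $u$ to be constant $\nu$-almost everywhere, i.e. $\alpha f=\beta g$ on $\{g\neq0\}$ (and the symmetric condition on $\{f\neq0\}$ from the exchanged argument), while disjoint supports correspond to $u\equiv0$, for which both bounds reduce to $\|f\|_p^p+\|g\|_p^p$.
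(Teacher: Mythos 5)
Your argument is correct, but it reaches \eqref{main-ineq} by a genuinely different route than the paper. The paper fixes $t\ge0$, writes $\frac{d}{dt}\int(f+tg)^p=p\|fg^{1/(p-1)}+tg^q\|_{p-1}^{p-1}$, bounds this derivative from below via Lemma~\ref{main'} (a consequence of H\"older's inequality, applied with exponent $p-1$) and from above via Minkowski's inequality, and then integrates over $t\in[0,1]$; in your notation this amounts to integrating the $t$-parametrized inequalities $\bigl(t+\int u\,d\nu\bigr)^{p-1}\le\int(u+t)^{p-1}\,d\nu\le\bigl(t+(\int u^{p-1}\,d\nu)^{1/(p-1)}\bigr)^{p-1}$, so both proofs ultimately establish the same two core estimates $\phi(\int u\,d\nu)\le\int\phi(u)\,d\nu\le\phi\bigl((\int u^{p-1}\,d\nu)^{1/(p-1)}\bigr)$. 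What you do differently is to obtain them in one shot from Jensen's inequality for the probability measure $g^p\,d\mu/\|g\|_p^p$: the lower bound from convexity of $\phi$, and the upper bound from concavity of the auxiliary function $\chi(v)=(1+v^{1/(p-1)})^p-v^{p/(p-1)}$, which you must verify by hand (your reduction to the tangent-line inequality $t^{p-1}\ge(p-1)t-(p-2)$ is correct, and this concavity computation is the one genuinely new ingredient your proof needs in place of the paper's Lemma~\ref{main'}). The trade-off: the paper's route uses only off-the-shelf H\"older/Minkowski and so handles the exponent bookkeeping automatically, while your route gives a cleaner conceptual picture --- $\underline{J}_p$ and $\overline{J}_p$ are the same functional $\phi$ evaluated at the arithmetic and the $(p-1)$-power mean of $u=f/g$ --- and, since $\nu$ lives only on $\{g\ne0\}$ and $\phi$, $\chi$ are concave/convex on all of $[0,\infty)$, it appears to dispense with the a.e.-positivity hypothesis that the paper must impose for $p\in(1,2]$ because its Lemma~\ref{main'} relies on the reverse H\"older inequality there. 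Your treatment of the equality cases (constant $u$ on $\{g\ne0\}$, or $u\equiv0$ for disjoint supports, forcing equality in both Jensen steps) matches the sufficiency claimed in the theorem and is consistent with Remark~\ref{equality}.
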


We note that (\ref{main-ineq}) and (\ref{Mooney}) are not generally comparable. 
However, as we will see, for certain functions $f$ and $g$, the presented bounds offer an improvement over (\ref{Mooney}).

In~\cite{Carbery}, Carbery also asked under what conditions on a sequence $(f_j)$ of functions in $L^p$ the inequality $\sum \|f_j\|_p^p < \infty$ would imply that $\sum f_j \in L^p$. When attempting to extend inequality (\ref{1}) from two functions to $n$ functions $f_1, f_2, \ldots, f_n$, the constant $2^{p-1}$ is replaced by $n^{p-1}$, which blows up with $n$. 
Ivanisvili and Mooney \cite{Mooney} provided an answer to this question by showing that for a sequence $(f_j)$ of nonnegative measurable functions
\begin{equation}\label{sum-Mooney}
\Bigl\|\sum_j f_j\Bigr\|_p^p\le \sum_j \|f_j\|_p^p+(2^p-2)\sum_{i<j}  \|f_if_j\|_{p/2}^{p/2},
\quad p\in[1,2],
\end{equation}
and the inequality reverses if $p\in  (0, 1] \cup [2, \infty)$.
In particular, if $p \in [1, 2]$, then $\sum_j \|f_j\|_p^p < \infty$ and $\sum_{i<j}\|f_j f_j\|_{p/2}^{p/2} < \infty$ impy that $\sum_jf_j \in L^p$.
The second main results of the present work proposes an answer to this question for all $p\in[2,\infty)$.
\begin{thm}\label{T-many}
Let $(f_j)$ be a sequence of nonnegative functions $f_j\in L^p$, $j\in\mathbb N$.
\begin{itemize}
 \item[(i)] If $p\in[2,\infty)$, then
\begin{equation}\label{sum-u}
\Bigl\|\sum_{j=1}^n f_j\Bigr\|_p^p
\le \sum_{j=1}^n \|f_j\|_p^p+C_{p}\Bigl\|\sum_{j=1}^n f_j\Bigr\|_p^{p-2}
\Bigl\|\sum_{1\le i<j\le n}  f_if_j\Bigr\|_{p/2},\quad n\in\mathbb{N},
\end{equation}
where 
\begin{equation}\label{cp}
C_{p}=\max\left\{p,\frac{2^p-2}{2^{p-2}}\right\}.
\end{equation}
\item[(ii)]
If $p\in[1,2]$, then
\begin{equation}\label{sum-l}
\Bigl\|\sum_{j=1}^n f_j\Bigr\|_p^p
\ge \sum_{j=1}^n \|f_j\|_p^p
+\frac{2^p-2}{2^{p-2}}\frac{\bigl\|\sum_{1\le i<j\le n}  f_if_j\bigr\|_{p/2}}{\bigl\|\sum_{j=1}^n f_j\bigr\|_p^{2-p}},\quad n\in\mathbb{N},
\end{equation}
provided that $\sum_{j=1}^nf_j\neq0$ almost eveerywhere.
%As a consequence,
% \item[(ii)] Let $p\in[2,\infty)$.  If $ \sum_j \|f_j\|_p^p<\infty$, then $\sum_j f_j\in L^p$ whenever  
%\begin{equation}\label{U}
%\Bigl\|\sum_{i<j}  f_if_j\Bigr\|_{p/2}<\infty.
%\end{equation}
%Let $p\in[1,2]$. If $\sum_j f_j\in L^p$, then $ \sum_j \|f_j\|_p^p<\infty$ and
%\begin{equation}\label{L}
%\Bigl\|\sum_{i<j}  f_if_j\Bigr\|_{p/2}<\infty.
%\end{equation}
\end{itemize}
\end{thm}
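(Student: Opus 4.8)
The plan is to reduce both parts to a single pointwise inequality between nonnegative reals and then pass to the $L^p$ statement by integrating and applying Hölder's inequality (in its usual form for (i), in reverse form for (ii)). Writing $s=\sum_{j=1}^n a_j$ for nonnegative reals $a_1,\dots,a_n$, the inequality I aim to establish pointwise is
\[
\Bigl(\sum_{j=1}^n a_j\Bigr)^p \le \sum_{j=1}^n a_j^p + C_p\Bigl(\sum_{j=1}^n a_j\Bigr)^{p-2}\sum_{1\le i<j\le n}a_ia_j, \qquad p\ge2,
\]
with $C_p$ as in \eqref{cp}, together with the reverse inequality (same two sides) for $p\in[1,2]$. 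Granting this, part (i) follows by substituting $a_j=f_j(x)$, integrating over the measure space, and bounding $\int (\sum_j f_j)^{p-2}\sum_{i<j}f_if_j$ by Hölder with conjugate exponents $\frac{p}{p-2}$ and $\frac{p}{2}$, which produces exactly $\|\sum_j f_j\|_p^{p-2}\,\|\sum_{i<j}f_if_j\|_{p/2}$. (The case $p=2$ is immediate, since then $(\sum_j f_j)^{p-2}\equiv1$.)

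The pointwise inequality I would obtain by first settling the two–term case and then inducting on $n$. For $n=2$, after dividing by $(a+b)^p$ and setting $t=a/(a+b)\in[0,1]$, the claim becomes $g(t):=t^p+(1-t)^p+C_p\,t(1-t)\ge1$ (resp. $\le1$). Since $t(1-t)>0$ on $(0,1)$ and $1-t^p-(1-t)^p\ge0$ there for $p\in[1,2]$, this is equivalent to $C_p\ge\sup_{t\in(0,1)}F(t)$ in the forward case and $C_p\le\inf_{t\in(0,1)}F(t)$ in the reverse case, where $F(t)=\bigl(1-t^p-(1-t)^p\bigr)/\bigl(t(1-t)\bigr)$. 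A direct computation gives the two boundary values $F(1/2)=(2^p-2)/2^{p-2}$ and $\lim_{t\to0^+}F(t)=p$, which is precisely the origin of the two competing quantities in \eqref{cp}.

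Once the two–term inequality is available for all nonnegative reals, I would prove the $n$–term version by induction. Writing $A=\sum_{j=1}^{n-1}a_j$ and $b=a_n$, the induction hypothesis controls $A^p$, and the two–term inequality controls $(A+b)^p$ in terms of $A^p$, $b^p$ and $(A+b)^{p-2}Ab$; combining them reduces the inductive step to comparing $A^{p-2}$ with $(A+b)^{p-2}$ on the old cross terms. For $p\ge2$ one has $(A+b)^{p-2}\ge A^{p-2}$, which is exactly what closes the forward induction, while for $p\in[1,2]$ the reverse inequality $A^{p-2}\ge(A+b)^{p-2}$ closes the reverse induction; thus the monotonicity of $s\mapsto s^{p-2}$ is what lets the single constant $C_p$ propagate from two to $n$ functions. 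For part (ii), the passage to $L^p$ then uses the reverse Hölder inequality: writing $v=\sum_{i<j}f_if_j$ and $S=\sum_j f_j$ and factoring $v^{p/2}=(S^{p-2}v)^{p/2}S^{(2-p)p/2}$, Hölder with exponents $\frac2p,\frac{2}{2-p}$ yields $\int S^{p-2}v\ge\|v\|_{p/2}/\|S\|_p^{2-p}$, which is the assertion \eqref{sum-l}. The hypothesis $\sum_j f_j\neq0$ almost everywhere is exactly what legitimizes the negative powers of $S$ in this factorization.

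The main obstacle is the sharp two–term inequality, i.e. locating $\sup_{(0,1)}F$ and $\inf_{(0,1)}F$. By symmetry $t=1/2$ is always a critical point of $F$, so everything hinges on showing that $F$ has no other interior critical point on $(0,1/2)$ beating the two boundary values; equivalently, that $F$ is monotone on $(0,1/2)$. Carrying this out requires a careful study of the sign of $F'$ (or, after clearing denominators, of the numerator of $g'$), whose behavior changes because $g''(t)=p(p-1)\bigl(t^{p-2}+(1-t)^{p-2}\bigr)-2C_p$ changes sign. Granting the monotonicity, $\sup_{(0,1)}F=\max\{p,(2^p-2)/2^{p-2}\}=C_p$ for $p\ge2$, giving the forward two–term bound, while for $p\in[1,2]$ the midpoint is the global minimum, so $\inf_{(0,1)}F=F(1/2)=(2^p-2)/2^{p-2}$, giving the reverse two–term bound with the stated constant.
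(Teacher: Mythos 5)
Your proposal is correct and follows essentially the same route as the paper: the same pointwise two-term inequality with the constant $C_p$ arising as $\sup$ (resp.\ $\inf$) of the same ratio (your $F(t)$ with $t=a/(a+b)$ is the paper's expression under the substitution $t=1/(1+s)$), the same induction on $n$ via monotonicity of $s\mapsto s^{p-2}$, and the same H\"older step (your explicit reverse-H\"older factorization for part (ii) is what the paper leaves as ``a similar argument''). The one calculus fact you flag as needing verification --- that $F$ has no interior critical point beating the values $p$ and $(2^p-2)/2^{p-2}$ --- is likewise asserted without proof in the paper, so you are at the same level of rigor.
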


Combining Theorem \ref{T-many} with (\ref{sum-Mooney}) we find that the following are necessary and sufficient conditions for the sum $\sum_j f_j$ to belong to $L^p$.
\begin{itemize}
\item[(i)] \textbf{Case} $p\in[1,2]$:
\begin{itemize}
\item[]\textit{Sufficient condition}: $ \sum_j \|f_j\|_p^p<\infty$ and $\sum_{i<j} \| f_if_j\|_{p/2}^{p/2}<\infty$.
\item[] \textit{Necessary condition}: $ \sum_j \|f_j\|_p^p<\infty$ and $\|\sum_{i<j}  f_if_j\|_{p/2}<\infty$.
\end{itemize}
 \item[(ii)] \textbf{Case} $p\in[2,\infty)$:
 \begin{itemize}
\item[] \textit{Sufficient condition}: $ \sum_j \|f_j\|_p^p<\infty$ and $\|\sum_{i<j}  f_if_j\|_{p/2}<\infty$.
\item[]\textit{Necessary condition}: $ \sum_j \|f_j\|_p^p<\infty$ and $\sum_{i<j}  \|f_if_j\|_{p/2}^{p/2}<\infty$.
\end{itemize}
\end{itemize}

\section{Proof of Theorem \ref{main}}

We start with  the following simple lemma which will be useful in the proof of our main estimates  for $\|f+g\|_p^p$.

\begin{lemma}\label{main'} Let $f\in L^p$ and $g\in L^q$, $q=\frac{p}{p-1}$, be nonnegative functions. Then 
\[ 
\|fg\|_1
\le \|g\|_q \bigl(\|f+g^{\frac{1}{p-1}}\|_p-\| g^{\frac{1}{p-1}} \|_p\bigr)
\le \|g\|_q \|f\|_p,
\quad p\in(1,\infty). 
\]
The reverse inequalities hold if $p\in(0,1)$, provided that $g$ is strictly positive almost everywhere.
 \end{lemma}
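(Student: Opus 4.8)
The plan is to reduce the whole sandwich to two classical inequalities after a single substitution. Writing $h=g^{\frac{1}{p-1}}$, so that $g=h^{p-1}$ and $fg=fh^{p-1}$, I first record the bookkeeping identity
\[
\|h\|_p^p=\int h^p=\int g^{\frac{p}{p-1}}=\int g^q=\|g\|_q^q,
\]
which, since $q/p=\frac{1}{p-1}$, yields the two relations $\|g\|_q=\|h\|_p^{p-1}$ and $\|h^{p-1}\|_q=\bigl(\int h^{(p-1)q}\bigr)^{1/q}=\|h\|_p^{p-1}$ (using $(p-1)q=p$). These let me express every quantity appearing in the statement in terms of $h$. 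The trivial case $\|g\|_q=0$ makes all three terms vanish, so I may assume $\|g\|_q\neq0$.

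For the rightmost inequality, dividing through by $\|g\|_q>0$ reduces the claim to $\|f+h\|_p-\|h\|_p\le\|f\|_p$, i.e. to Minkowski's inequality $\|f+h\|_p\le\|f\|_p+\|h\|_p$; nothing further is needed. For the leftmost inequality, the key move is to transpose the subtracted term and use $\|h\|_p^{p-1}\|h\|_p=\|h\|_p^p=\int h\cdot h^{p-1}$ to absorb it into $\|fg\|_1=\int fh^{p-1}$. After this regrouping the inequality becomes exactly
\[
\int (f+h)\,h^{p-1}\le \|f+h\|_p\,\|h\|_p^{p-1}=\|f+h\|_p\,\|h^{p-1}\|_q,
\]
which is Hölder's inequality applied to the conjugate pair $(f+h)\in L^p$ and $h^{p-1}\in L^q$. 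Thus for $p\in(1,\infty)$ the full chain follows from Minkowski and Hölder together with the elementary identity linking $\|g\|_q$ and $\|h\|_p$.

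For $p\in(0,1)$ (so $q<0$) the same algebraic reduction goes through verbatim, but I would invoke the reverse forms of Hölder's and Minkowski's inequalities; this is precisely where the hypothesis that $g$ is strictly positive almost everywhere is used, since it guarantees $h>0$ a.e. so that the reverse inequalities apply and the expressions involving the negative exponent $h^{p-1}$ (and $q<0$) remain meaningful. I do not expect a genuine obstacle anywhere: the only real content is spotting the substitution $h=g^{1/(p-1)}$ and the regrouping that exposes the middle term as a single Hölder pairing $\int(f+h)h^{p-1}$. The one point deserving care is the $p\in(0,1)$ case, where I must verify that the relevant integrals are finite (or adopt consistent conventions for the negative exponent) before reversing the inequalities, but this is routine bookkeeping rather than a structural difficulty.
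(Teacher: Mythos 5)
Your proof is correct and is essentially the paper's own argument: the regrouped inequality $\int(f+h)h^{p-1}\le\|f+h\|_p\|h^{p-1}\|_q$ with $h=g^{1/(p-1)}$ is exactly the paper's single application of H\"older's inequality to $\int g\bigl(f+g^{q-1}\bigr)$, followed by Minkowski for the right-hand bound. The substitution $h=g^{1/(p-1)}$ and the bookkeeping identities are just a notational repackaging of the same computation, so there is nothing substantively different to compare.
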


\begin{proof}
We only prove the case $p\in(1,\infty)$, since the case $p\in(0,1)$ is similar by using the corresponding versions of H\"older's and Minkowski's inequalities.
By H\"older's inequality, we have
\[
\int fg+\int g^q
=\int g(f+g^{q-1})\le\biggl(\int g^q\biggr)^{1/q}\biggl(\int(f+g^{\frac{1}{p-1}})^p\biggr)^{1/p},
\]
from which it follows that
\[ 
\|fg\|_1\le \|g\|_q \bigl(\|f+g^{\frac{1}{p-1}}\|_p-\| g^{\frac{1}{p-1}} \|_p \bigr). 
\]
We also note that the right-hand side is smaller or equal to $\|g\|_q \|f\|_p$ by Minkowski's inequality. 
\end{proof} 

\begin{remark}
Let $f\in L^p$ and $g\in L^p$, $p>1$, be nonnegative functions.
Then $g^{p-1}\in L^q$, $q=\frac{q}{q-1}$, and by replacing $g$ with $g^{p-1}$ in (\ref{main'}), we obtain
\[ 
\|fg^{p-1}\|_1\le \|g\|_p^{p-1}\bigl(\|f+g\|_p-\|g\|_p\bigr).
\]
As a consequence, we have the following reverse version of  Minkowski's inequality
\[\|g\|_p +\frac{\|fg^{p-1}\|_1}{\|g\|_p^{p-1}}\le \|f+g\|_p\le \|f\|_p+\|g\|_p.\]
Again the reverse inequalities hold if $p\in(0,1)$, assuming that $g$ is strictly positive almost everywhere.
\end{remark}

\begin{proof}[Proof of Theorem \ref{main}]
We only prove the case $p\in[2,\infty)$, since the argument for $p\in(1,2]$ is similar with inequalities to the reverse direction.
For $t\ge0$ we have
\begin{align*}
\frac{d}{dt}\int (f+tg)^p&= p\int g(f+tg)^{p-1}= p\int (fg^{\frac{1}{p-1}}+tg^q)^{p-1}\\&=p \|fg^{\frac{1}{p-1}}+tg^q\|_{p-1}^{p-1}.
\end{align*}
By Lemma \ref{main'}, with $p$ replaced with $p-1>1$, we obtain 
\begin{align*}
\frac{d}{dt}\int (f+tg)^p
& \ge p\biggl(  t\|g^q\|_{p-1} +\frac{\|fg^{\frac{1}{p-1}}g^{q(p-2)}\|_1}{\|g^q\|_{p-1}^{p-2}}\biggr)^{p-1}\\
&= p\biggl(  t\|g\|^q_{p} +\frac{\|fg^{p-1}\|_1}{\|g\|_{p}^{q(p-2)}}\biggr)^{p-1}
= \frac{d}{dt} \frac{\Bigl(  t\|g\|^q_{p} +\frac{\|fg^{p-1}\|_1}{\|g\|_{p}^{q(p-2)}}\Bigr)^{p}}{\|g\|_p^q}\\&
=\frac{d}{dt} \frac{\bigl(t\|g\|_p^p+\|fg^{p-1}\|_1\bigr)^p}{\|g\|_p^{p(p-1)}}.
\end{align*}
It follows that $F'(t)\ge0$, where
\[
F(t)=\int (f+tg)^p-\frac{\bigl(t\|g\|_p^p+\|fg^{p-1}\|_1\bigr)^p}{\|g\|_p^{p(p-1)}}.
\]
Therefore, $F(1)\ge F(0)$ and, after a simplification, we arrive at
\begin{align*}
\|f+g\|_p^p
&\ge \|f\|_p^p +\frac{\bigl(\|g\|^p_p+\|fg^{p-1}\|_1\bigr)^p- \|fg^{p-1}\|_1^p  }{ \|g\|_{p}^{p(p-1)}}\\
&=\|f\|_p^p +\|g\|_p^p \biggl( \biggl(1+\frac{\|fg^{p-1}\|_1}{\|g\|_p^p} \biggr)^p-\biggl(\frac{\|fg^{p-1}\|_1}{\|g\|_p^p}\biggr)^p\biggr) \\
&= \|f\|_p^p +\|g\|_p^p +\|g\|_p^p \biggl( \biggl(1+\frac{\|fg^{p-1}\|_1}{\|g\|_p^p} \biggr)^p- 1-\biggl(\frac{\|fg^{p-1}\|_1}{\|g\|_p^p}\biggr)^p\biggr) 
\end{align*}
or equivalently,
\[ 
\frac{ \|f+g\|_p^p}{\|f\|_p^p +\|g\|_p^p}
\ge 1+ \frac{  \Bigl(1+\frac{\|fg^{p-1}\|_1}{\|g\|_p^p} \Bigr)^p- 1-\Bigl(\frac{\|fg^{p-1}\|_1}{\|g\|_p^p}\Bigr)^p}{1+\bigl(\frac{\|f\|_p}{\|g\|_p}\bigr)^p}
=\underline{J}_p(f,g).
\]
The corresponding inequality also holds by interchanging the roles of $f$ and $g$.

To prove the inequality involving $\overline{J}_p(f,g)$, for $t\ge0$ we have
\begin{align*}
\frac{d}{dt} \int (f+tg)^p
&= p\int g(f+tg)^{p-1}
= p\int (fg^{\frac{1}{p-1}}+tg^q)^{p-1}\\
&=p \|fg^{\frac{1}{p-1}}+tg^q\|_{p-1}^{p-1}. 
\end{align*}
By Minkowski's inequality we have
\begin{equation}\label{triangle}
\begin{split}
p \|fg^{\frac{1}{p-1}}+tg^q\|_{p-1}^{p-1}
&\le p\bigl(\| fg^{\frac{1}{p-1}}\|_{p-1}+t\|g^q\|_{p-1}\bigr)^{p-1} \\
&=\frac{d}{dt} \frac{\bigl(\| fg^{\frac{1}{p-1}}\|_{p-1}+t\|g^q\|_{p-1}\bigr)^{p}}{\|g^q\|_{p-1}}.
\end{split}
\end{equation}
Hence $ G'(t)\le0$, where
\[
G(t)=\|f+tg\|_p^p- \frac{\bigl(\| fg^{\frac{1}{p-1}}\|_{p-1}+t\|g^q\|_{p-1}\bigr)^{p}}{\|g^q\|_{p-1}}.
\]
Thus $G(1)\le G(0)$ and 
\begin{align*}
\|f+g\|_p^p
&\le \|f\|_p^p+ \frac{\bigl(\| fg^{\frac{1}{p-1}}\|_{p-1}+\|g^q\|_{p-1}\bigr)^{p}-\| fg^{\frac{1}{p-1}}\|_{p-1}^p}{\|g^q\|_{p-1}}\\
&= \|f\|_p^p+\|g\|_p^p\biggl(\biggl(1+\frac{\| fg^{\frac{1}{p-1}}\|_{p-1}}{\|g^q\|_{p-1}}\biggr)^p-\frac{\| fg^{\frac{1}{p-1}}\|_{p-1}^p}{\|g^q\|_{p-1}^p}\biggr)\\
&=\|f\|_p^p+\|g\|_p^p+\|g\|_p^p\biggl( \biggl(1+\frac{\| fg^{\frac{1}{p-1}}\|_{p-1}}{\|g^q\|_{p-1}} \biggr)^p-1-\biggl(\frac{\| fg^{\frac{1}{p-1}}\|_{p-1}}{\|g^q\|_{p-1}}\biggr)^p\biggr)\\
&=\Biggl(1+\frac{ \Bigl(1+\frac{\| fg^{\frac{1}{p-1}}\|_{p-1}}{\|g^q\|_{p-1}} \Bigr)^p-1-\Bigl(\frac{\| fg^{\frac{1}{p-1}}\|_{p-1}}{\|g^q\|_{p-1}}\Bigr)^p}{1+ \Bigl(\frac{\|f\|_p}{\|g\|_p}\Bigr)^p }\Biggr) 
(\|f\|_p^p+\|g\|_p^p)\\
&=\overline{J}_p (f,g)\bigl(\|f\|_p^p+\|g\|_p^p\bigr).
\end{align*}
The corresponding inequality also holds by interchanging the roles of $f$ and $g$.
For the equality part, see Remark \ref{equality}.
\end{proof}

\begin{remark} 
Let $p\in[2,\infty)$,
By H\"older's inequality we have
\begin{equation}\label{x}
\frac{\| fg^{\frac{1}{p-1}}\|_{p-1}}{\|g^q\|_{p-1}}\le \frac{\|f\|_p}{\|g\|_p}.
\end{equation}
Let 
\[
T=\frac{\|f\|_p}{\|g\|_p}
\]
and let $0\le\lambda\le 1$ be such that
\[
\lambda T=\frac{\| fg^{\frac{1}{p-1}}\|_{p-1}}{\|g^q\|_{p-1}}.
\] 
The function $H(t)=(1+t)^p-1-t^p$ is convex on $[0,\infty)$, thus
\[
H(\lambda T)\le (1-\lambda)H(0)+\lambda H(T)= \lambda H(T).
\]
By (\ref{main-ineq}) we have
\[
\begin{split}
\frac{ \|f+g\|_p^p}{\|f\|_p^p +\|g\|_p^p}
&\le \overline{J}_p(f,g)
=1+\frac{H(\lambda T)}{1+T^p}
\le1+\frac{\lambda H(T)}{1+T^p}\\
&=1+\lambda \frac{(1+T)^p-1-T^p}{1+T^p}
=(1-\lambda)+\lambda \frac{(1+T)^p}{1+T^p}\\
&=(1-\lambda)+\lambda \frac{ (\|f\|_p+\|g\|_p)^p}{\|f\|_p^p +\|g\|_p^p}.
\end{split}
\]
It follows that
\begin{equation}
\|f+g\|_p^p\le \lambda (\|f\|_p+\|g\|_p)^p+(1-\lambda) (\|f\|_p^p +\|g\|_p^p).
\end{equation}
In particular,
\[\frac{ \|f+g\|_p^p}{\|f\|_p^p +\|g\|_p^p}\le (1-\lambda)+\lambda \frac{( \|f\|_p+\|g\|_p)^p}{\|f\|_p^p +\|g\|_p^p}\le(1-\lambda)+2^{p-1}\lambda\le 2^{p-1}.\]

Note also that   we can simplify the left-hand side inequality in (\ref{main-ineq}), using the  inequality
\begin{equation}\label{mooney-ineq}
(a+b)^p\ge a^p+b^p+(2^p-2)(ab)^{p/2},
\quad p\in[2,\infty),
\end{equation} 
for nonnegative numbers 
\[
a=1
\quad\text{and}\quad 
b=\frac{\|fg^{p-1}\|_1}{\|g\|_p^p},
\] 
see \cite{Mooney}.
By (\ref{mooney-ineq}) we have
\begin{equation}\label{simple}
\frac{ \|f+g\|_p^p}{\|f\|_p^p +\|g\|_p^p}\ge  \underline{J}_p(f,g)\ge 1+ \frac{  (2^p-2) \Bigl(\frac{\|fg^{p-1}\|_1}{\|g\|_p^p}\Bigr)^{p/2}  }{1+\Bigl(\frac{\|f\|_p}{\|g\|_p}\Bigr)^p},
\quad p\in[2,\infty).
\end{equation}
Again the reverse inequalities hold for $p\in[1,2]$.

In particular, for nonnegative $f,g\in L^p$ with $\|f\|_p=1$ and $\|g\|_p=1$, from (\ref{simple}) we have
\[
\|f+g\|_p^p\ge 2+(2^p-2)\|fg^{p-1}\|_1^{p/2},
\quad p\in[2,\infty), 
\]
or equivalently
\[ 
1-\left\|\frac{f+g}{2}\right\|_p^p\le (1-2^{1-p})\bigl(1-\|fg^{p-1}\|_1^{p/2}\bigr),
\quad p\in[2,\infty), 
\]
and the inequality reverses if $p\in[1,2]$.

Moreover, by using the inequality above and Hanner's inequality
\[
\left\|\frac{f+g}{2}\right\|_p^p+\left\|\frac{f-g}{2}\right\|_p^p\le1,
\quad p\in[2,\infty), 
\]
see \cite{Hanner, Lieb-book, Lieb1} (also a consequence Clarkson's inequalities \cite{Clarkson}),
we have
\[
\|f-g\|_p^p\le (2^p-2)\bigl(1-\|fg^{p-1}\|_1^{p/2}\bigr),
\quad p\in[2,\infty).
\]
Again the reverse inequality holds for $p\in(1,2]$.

\end{remark}
\begin{remark}\label{equality}
By applying the explicit upper and lower bounds in (\ref{main-ineq}), we obtain the error estimate
\begin{equation}\label{error}
\left|\overline{J}_p(f,g) -  \underline{J}_p(f,g)\right|
\le p2^{p-1} \biggl|\frac{\| fg^{\frac{1}{p-1}}\|_{p-1}}{\|g^q\|_{p-1}}- \frac{\|fg^{p-1}\|_1}{\|g\|_p^p}\biggr|,
\quad p\in(1,\infty),
\end{equation}
whenever $\|f\|_p\le \|g\|_p$. If $\|f\|_p \ge \|g\|_p$, the roles of $f$ and $g$ are interchanged.

To prove (\ref{error}), let $H(t)=(1+t)^p-t^p$, $t\ge0$,
\[
T_1=\frac{\|fg^{p-1}\|_1}{\|g\|_p^p}
\quad\text{and}\quad
T_2= \frac{\| fg^{\frac{1}{p-1}}\|_{p-1}}{\|g^q\|_{p-1}}.
\] 
Then we have
\[
\left|\overline{J}_p(f,g) -  \underline{J}_p(f,g)\right|
=\frac{|H(T_2)-H(T_1)|}{1+ \Bigl(\frac{\|f\|_p}{\|g\|_p}\Bigr)^p}
\le |H(T_2)-H(T_1)|. 
\]
Let $p\in[2,\infty)$. Since $\frac{p-2}{p-1}+\frac{1}{p-1}=1$,  H\"older's inequality gives
\begin{equation}\label{holder}
\int fg^{p-1}
=\int (fg^{\frac{1}{p-1}}) g^{\frac{p(p-2)}{p-1}}
\le\biggl(\int f^{p-1}g\biggr)^{1/(p-1)}\biggl(\int g^p\biggr)^{1-1/(p-1)},
\end{equation}
or equvalently
\begin{equation}\label{x>y}
T_1= \frac{\|fg^{p-1}\|_1}{\|g\|_p^p}\le \frac{\| fg^{\frac{1}{p-1}}\|_{p-1}}{\|g^q\|_{p-1}}=T_2,
\end{equation}
By (\ref{x>y}), (\ref{x}) and the assumption $\|f\|_p\le \|g\|_p$ we get $0\le T_1\le T_2\le1$. 
We may assume that $0\le T_1<T_2\le1$, since $\overline{J}_p(f,g) = \underline{J}_p(f,g)$ if $T_1=T_2$.
By the mean value theorem, there exists $T_0 \in(T_1,T_2)$ such that 
\[
H(T_2)-H(T_1)=H'(T_0)(T_2-T_1).
\]
Since 
\[
H'(T_0)=p(1+T_0)^{p-1}-pT_0^{p-1}\le p2^{p-1},
\] 
we obtain \eqref{error} for $p\in[2,\infty)$. The case $p\in(0,1)$ follows similarly.

Finally, note that equality $\overline{J}_p(f,g) = \underline{J}_p(f,g)$ holds if and only if $T_1 = T_2$, i.e., when equality holds in H\"older's inequality \eqref{holder}. 
If $\|f^{p-1}g\|_1\ne0$ and $\|g\|_p\ne0$, this occurs if and only if there exist constants $\alpha, \beta>0$ such that $\alpha f^{p-1} g = \beta g^p$ almost everywhere,
or equivalently $\alpha f = \beta g$ almost everywhere in $\{ g \neq 0 \}$.
On the other hand, if $\|f^{p-1}g\|_1=0$, then $fg=0$ almost everywhere and $f$ and $g$ have disjoint supports up to a set of measure zero.
\end{remark}

\begin{example}\label{comparison}  
We discuss functions for which the upper bound in~(\ref{main-ineq}) is stonger than the upper bound in~(\ref{Mooney}). 
Consider measurable sets $A$ and $B$ of strictly positive measure and \( A \cap B = \emptyset \). For any \(\alpha,\beta > 0 \), let
\[
f = \alpha\phi\chi_{A} +\psi_1 \chi_{B} 
\quad\text{and}\quad
g =\beta\phi \chi_{A}+\psi_2\chi_{B},
\]
where $\phi\in L^p(A)$, $\phi>0$ and $\psi_1,\psi_2\in L^p(B)$, $\psi_1,\psi_2\ge0$. 
First assume that $\psi_2\equiv 0$. 
In this case we have $\beta f=\alpha g$ on $\{g\neq 0\}$, by Remark \ref{equality} we have $\overline{J}_p(f,g) = \underline{J}_p(f,g)$.
Thus we have equality in both sides of (\ref{main-ineq}), whereas this is not the case for the inequality in~(\ref{Mooney}), when $p\neq 2$. 
This also demonstrates that, at least when $\|\psi_2\|_{L^p(\Omega_2)}>0$ is sufficiently small, the upper bound in~(\ref{main-ineq}) provides a strictly smaller than the upper bound in~(\ref{Mooney}).
\end{example}

\section{Proof of Theorem \ref{T-many}}

We start with an elementary inequality for real numbers.
\begin{lemma}\label{many}
Let $C_p$ as in (\ref{cp}).
\begin{itemize}
 \item[(i)] Let $p\in[2,\infty)$. For nonnegative real numbers $a$ and $b$, we have
\begin{equation}\label{ab}
(a+b)^p\le a^p+b^p+C_{p}(a+b)^{p-2}ab.
\end{equation}
More generally, for any sequence $(a_i)$ of nonnegative real numbers, we have 
\begin{equation}\label{ai}
\Bigl(\sum_{j=1}^na_j\Bigr)^p
\le \sum_{j=1}^na_j^p +C_{p}\Bigl(\sum_{j=1}^na_j\Bigr)^{p-2} \sum_{1\le i<j\le n}  a_ia_j\quad n\in\mathbb{N}.
\end{equation}
 \item[(ii)] Let  $p\in[1,2]$. For nonnegative real numbers $a$ and $b$, we have
\begin{equation}\label{ab2}
(a+b)^p\ge a^p+b^p+\left(\frac{2^p-2}{2^{p-2}}\right)\frac{ab}{ (a+b)^{2-p}}.
\end{equation}
More generally, for any sequence $(a_i)$ of nonnegative real numbers, we have 
\begin{equation}\label{ai2}
\Bigl(\sum_{j=1}^na_j\Bigr)^p
\ge \sum_{j=1}^na_j^p 
+\left(\frac{2^p-2}{2^{p-2}}\right)\frac{ \sum_{1\le i<j\le n}  a_ia_j}{\bigl(\sum_{j=1}^na_j\bigr)^{2-p}},
\quad n\in\mathbb{N},
\end{equation}
provided that $\sum_{j=1}^na_j\neq0$.
\end{itemize}
\end{lemma}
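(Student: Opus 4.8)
The plan is to prove the two-variable inequalities (\ref{ab}) and (\ref{ab2}) first and then bootstrap to the $n$-term versions (\ref{ai}) and (\ref{ai2}) by induction. Every quantity in (\ref{ab}) and (\ref{ab2}) is homogeneous of degree $p$ in $(a,b)$, so I would normalise $a+b=1$ and set $a=t$, $b=1-t$ with $t\in[0,1]$, noting that then $(a+b)^{p-2}=(a+b)^{2-p}=1$. Both inequalities collapse to the single one-variable statement
\[
1-t^p-(1-t)^p \le C_p\,t(1-t)\qquad (p\ge2),
\]
with the reverse inequality and constant $\frac{2^p-2}{2^{p-2}}$ when $p\in[1,2]$. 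Thus the whole lemma reduces to comparing the nonnegative function $v(t)=1-t^p-(1-t)^p$, which is symmetric about $t=\tfrac12$, with the parabola $t(1-t)$.

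For this core estimate I would use the fundamental theorem of calculus to write
\[
v(t)=p\int_0^t\bigl[(1-u)^{p-1}-u^{p-1}\bigr]\,du,\qquad t(1-t)=\int_0^t(1-2u)\,du,
\]
and study $R(t)=v(t)-C_p\,t(1-t)=\int_0^t\rho(u)\,du$ with $\rho(u)=p\bigl[(1-u)^{p-1}-u^{p-1}\bigr]-C_p(1-2u)$. Two structural facts drive the argument: $\rho$ is antisymmetric and $R$ symmetric about $u=\tfrac12$, so it suffices to control $R$ on $[0,\tfrac12]$; and the constant is calibrated precisely so that the boundary data $R(0)=0$ and $R(\tfrac12)=v(\tfrac12)-\tfrac14C_p$ carry the right sign. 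This is exactly where the two competing constants in $C_p=\max\{p,\frac{2^p-2}{2^{p-2}}\}$ enter: the requirement $C_p\ge p$ is equivalent to $\rho(0)=p-C_p\le0$ (the endpoint behaviour), while $C_p\ge\frac{2^p-2}{2^{p-2}}$ is equivalent to $R(\tfrac12)\le0$ (the midpoint), and both are needed, which forces the maximum. For $p\in[1,2]$ only the midpoint is binding, so the single constant $\frac{2^p-2}{2^{p-2}}$ and $R(\tfrac12)=0$ suffice.

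It then remains to upgrade sign information at the two boundary points to sign information on all of $[0,\tfrac12]$. I would differentiate once more:
\[
\rho'(u)=-p(p-1)\bigl[(1-u)^{p-2}+u^{p-2}\bigr]+2C_p,
\]
and observe that $(1-u)^{p-2}+u^{p-2}$ is monotone on $(0,\tfrac12)$, so $\rho$ is either convex or concave there, the switch occurring exactly at $p=3$ (which is precisely where $p$ and $\frac{2^p-2}{2^{p-2}}$ cross). In each regime $\rho$ has at most one sign change on $[0,\tfrac12]$, so $R$ is either monotone or single-humped; combined with $R(0)=0$ and the sign of $R(\tfrac12)$ this pins down $R\le0$ on $[0,\tfrac12]$ (and $R\ge0$ for $p\in[1,2]$, where $\rho$ is always convex), after which symmetry handles $[\tfrac12,1]$ and the one-variable inequality follows.

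Finally I would pass to $n$ terms by induction. Writing $S=\sum_{j=1}^n a_j$ and $A=\sum_{j=1}^{n-1}a_j$, the two-variable case applied to $(A,a_n)$ gives $S^p\le A^p+a_n^p+C_p\,S^{p-2}Aa_n$, and $Aa_n=\sum_{j<n}a_ja_n$ supplies exactly the cross terms missing after the inductive bound on $A^p$; since $A\le S$ and $t\mapsto t^{p-2}$ is increasing for $p\ge2$, I may replace the prefactor $A^{p-2}$ by $S^{p-2}$ in the correct direction, and the reverse monotonicity for $p\in[1,2]$ makes the same substitution valid for the lower bound (the degenerate case $A=0$ being trivial). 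The main obstacle is the one-variable inequality itself: because $\rho$ genuinely changes sign on $[0,\tfrac12]$ one cannot simply integrate a fixed sign, and the argument hinges on coupling the convexity or concavity of $\rho$ with the exact midpoint calibration of $C_p$, the delicate feature being that this convexity reverses according to whether $p$ exceeds or falls below $3$.
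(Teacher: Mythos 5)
Your proof is correct, and its outer skeleton --- reduce the two-variable inequalities (\ref{ab}) and (\ref{ab2}) to a one-variable inequality by homogeneity, then pass to $n$ terms by induction using the monotonicity of $t\mapsto t^{p-2}$ to replace the partial sum by the full sum in the prefactor --- coincides with the paper's. The genuine difference lies in how the core one-variable inequality is handled. The paper substitutes $t=b/a\le1$ and then simply \emph{asserts} the identity
\[
\sup_{0<t\le 1}\frac{(1+t)^p-1-t^p}{t(1+t)^{p-2}}=\max\Bigl\{p,\tfrac{2^p-2}{2^{p-2}}\Bigr\},
\qquad p\in[2,\infty),
\]
together with the corresponding infimum statement for $p\in[1,2]$, offering no verification; after your normalisation $a+b=1$ this is exactly the statement you prove. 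Your analysis of $R(t)=\int_0^t\rho(u)\,du$ is sound: the endpoint condition $\rho(0)=p-C_p\le0$ and the midpoint condition $R(\tfrac12)\le0\Leftrightarrow C_p\ge\tfrac{2^p-2}{2^{p-2}}$ are both necessary, and the monotonicity of $u\mapsto(1-u)^{p-2}+u^{p-2}$ on $(0,\tfrac12)$ makes $\rho$ convex ($p>3$ or $p<2$) or concave ($2<p<3$) there, which together with $\rho(\tfrac12)=0$ limits $\rho$ to one sign change and pins down the sign of $R$. So your write-up actually supplies the analytic verification the paper omits, and as a bonus it explains why the maximum of the two constants is forced and is sharp. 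Two small points are worth spelling out explicitly: (a) the claim that a convex (resp.\ concave) function vanishing at $u=\tfrac12$ has at most one sign change on $[0,\tfrac12]$ uses that its superlevel (resp.\ sublevel) sets are intervals and that a third distinct zero would contradict convexity; and (b) for $p\in[1,2]$ your ``only the midpoint is binding'' tacitly requires $p\ge\tfrac{2^p-2}{2^{p-2}}$ on $[1,2]$ so that $\rho(0)\ge0$, which is true but should be checked. Neither is a gap, only an omission of routine detail.
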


\begin{proof}
We first prove (\ref{ab}). The case $ab=0$ is obvious. Thus we may assume that $a,b>0$. 
Without loss of generality we may assume that $a\ge b$. 
Let $t=\frac{b}{a}\le 1$. With this notation (\ref{ab}) is equivalent to
\[(1+t)^p\le 1+t^p+ C_{p} t(1+t)^{p-2}.\]
This inequality holds true, since for $p\in[2,\infty)$ we have
\[
\sup_{0<t\le 1} \frac{(1+t)^p-1-t^p}{ t(1+t)^{p-2}}=\max\left\{p,\frac{2^p-2}{2^{p-2}}\right\}=C_p.
\]

Next we discuss (\ref{ai}). We only show (\ref{ai}) for $n=3$, and the general case follows by induction. For  $a,b,c\ge0$, using (\ref{ab}) we obtain
\begin{align*}
(a+b+c)^p&\le (a+b)^p+c^p+C_{p}(a+b+c)^{p-2}((a+b)c)\\
&\le  a^p+b^p+C_{p}(a+b)^{p-2}ab+c^p+C_{p}(a+b+c)^{p-2}(ac+bc)\\
&\le a^p+b^p+c^p+C_{p}(a+b+c)^{p-2}(ab+ ac+bc).
\end{align*}

Then we discuss (\ref{ab2}). The case $ab=0$ is obvious. 
Without loss of generality, we may assume that $a\ge b>0$. Let $t=\frac{b}{a}\le 1$.
With this notation (\ref{ab2}) is equivalent to
\[(1+t)^p\ge 1+t^p+\left(\frac{2^p-2}{2^{p-2}}\right) t(1+t)^{p-2}.\]
This inequality holds true, since for $p\in[1,2]$ we have
\[\inf_{0<t\le 1} \frac{(1+t)^p-1-t^p}{t(1+t)^{p-2}}
=\frac{2^p-2}{2^{p-2}}
=C'_p .
\]

Finally we consider (\ref{ai2}). We only prove (\ref{ai2}) for $n=3$, and the general case follows by induction. 
For $a,b,c\ge0$, using (\ref{ab2}) we have
\begin{align*}
(a+b+c)^p&\ge (a+b)^p+c^p+C'_{p}\frac{(a+b)c}{(a+b+c)^{2-p}}\\&
\ge  a^p+b^p+C'_{p}\frac{ab}{ (a+b)^{2-p}}+c^p+C'_{p}\frac{ac+bc}{(a+b+c)^{2-p}}\\&
\ge  a^p+b^p+c^p+C'_{p}\frac{ab+ac+bc}{(a+b+c)^{2-p}}.
\end{align*}
\end{proof}

\begin{proof}[Proof of Theorem \ref{T-many}]
Let $p\in[2,\infty)$. We apply (\ref{ai}) with $a_j = f_j(x)$ and integrate the obtained inequality to get
\begin{equation}\label{sn}
\Bigl\|\sum_{j=1}^n f_j\Bigr\|_p^p
=\int\Bigl(\sum_{j=1}^nf_j\Bigr)^p
\le \int \sum_{j=1}^n f_j^p +C_{p} \int\Bigl(\Bigl(\sum_{j=1}^nf_j\Bigr)^{p-2} \sum_{1\le i<j\le n} f_if_j\Bigr).
\end{equation}
Since $\frac{p-2}{p}+\frac{2}{p}=1$, H\"older's inequality implies that
\begin{align*}
\int \Bigl(\Bigl(\sum_{j=1}^nf_j\Bigr)^{p-2} \sum_{1\le i<j\le n} f_if_j\Bigr) 
&\le   
\biggl(\int\Bigl(\sum_{j=1}^nf_j\Bigr)^p\biggr)^{(p-2)/p}
\biggl(\int\Bigl(\sum_{1\le i<j\le n}  f_if_j\Bigr)^{p/2}\biggr)^{2/p}.
\end{align*}
Inserting this inequality into (\ref{sn}) gives
\[
\Bigl\|\sum_{j=1}^n f_j\Bigr\|_p^p
\le  \sum_{j=1}^n\| f_j\|_p^p 
+C_{p}\Bigl\|\sum_{j=1}^n f_j\Bigr\|_p^{p-2}\Bigl\|\sum_{1\le i<j\le n}  f_if_j\Bigr\|_{p/2}.
\]
This proves (\ref{sum-u}). 
A similar argument proves (\ref{sum-l}) for  $p\in[1,2)$, by using the corresponding version of H\"older's inequality.
\end{proof}

\section{Acknowledgments}
 This work was carried out
during the first author’s visit at the Department of Mathematics at Aalto University. He
would like to thank the institution and the Nonlinear Partial Differential Equations group for
the kind and warm hospitality

\end{document}